\documentclass[12pt]{article}
\usepackage{latexsym,amsfonts,amsthm,amsmath,amscd,amssymb}
\usepackage{tikz-cd}

\setlength{\textheight}{23cm} \setlength{\textwidth}{16cm}
\setlength{\topmargin}{-1cm} \setlength{\oddsidemargin}{10mm}

\newtheorem{lemma}{Lemma}[section]
\newtheorem{thm}[lemma]{Theorem}
\newtheorem{rem}[lemma]{Remark}

\newtheorem{cor}[lemma]{Corollary}

\newtheorem{example}[lemma]{Example}

\begin{document}

\title{Finite-dimensional diffeological vector spaces being and not being coproducts}

\author{Ekaterina~{\textsc Pervova}}

\maketitle

\begin{abstract}
\noindent It is known that for a topological vector space it is possible to be the coproduct of two of its subspaces in the category of vector spaces but while not being the coproduct of the same subspaces in the category of topological vector spaces. There are however wide classes of spaces where this cannot occur, notably finite-dimensional spaces (but also some infinite-dimensional ones, for instance, Banach spaces). In contrast, this kind of phenomen occurs easily (and frequently, as we here show) for finite-dimensional diffeological vector spaces, where its numerous instances are readily obtained in any dimension starting from $2$. After briefly reviewing what is known on this question in some classical categories, we provide an overview of this phenomenon and some of its implications for finite-dimensional diffeological vector spaces, indicating briefly its connections with some other subjects.
\vspace{5mm}

\noindent\textbf{Keywords:} diffeology, diffeological vector space

\noindent\textbf{MSC (2020)}: 53C15 (primary), 46A19, 53-02 (secondary)
\end{abstract}

\section{Vector spaces in general as coproducts or not}

There is a simple mathematical curiosity that arises when a vector space is endowed with an extra structure, such as that of a topological vector space, a smooth structure of sorts, and so on. In the most general terms, this curiosity can be described as follows: let $C$ be a category endowed with a faithful functor $\Xi$ to the category \textbf{Vect} of vector spaces, and let $c,a,b$ be objects of $C$ such that $\Xi(c)$ is the coproduct in the category \textbf{Vect} of $\Xi(a)$ and $\Xi(b)$; then one may wonder whether $c$ itself is the coproduct of $a$ and $b$ in the initial category $C$. 

\paragraph{Topological vector spaces} The most obvious instance of course is that of the category $C$ being the category of topological vector spaces and $\Xi$ being the forgetful functor into \textbf{Vect}; given a topological vector space $V$ whose underlying vector space decomposes into a direct sum of two of its subspaces, $\Xi(V)=\Xi(V_1)\oplus\Xi(V_2)$, where $V_1$ and $V_2$ are vector subspaces of $V$ endowed with the subspace topology, one may wonder whether the product topology on $V_1\oplus V_2$ (relative to their subspace topologies inherited from $V$) coincides with the topology of $V$ itself (this is usually expressed as $V$ being an \emph{algebraic direct sum} of $V_1$ and $V_2$ vs. it being their \emph{topological direct sum}). 

Now, in this case it is rather well-known that the answer is positive for any finite-dimensional $V$, while it may turn out to be negative in the case of infinite dimension. Indeed, any finite-dimensional Hausdorff topological vector space is homeomorphic to $\mathbb{R}^n$ (if $V$ is over $\mathbb{R}$, to $\mathbb{C}^n$ if it is over $\mathbb{C}$), while in the case of non-Hausdorff topological vector spaces it suffices to recall that every such space is the product of a Hausdorff space and an indiscrete space.

Thus, for topological vector spaces the question of an algebraic direct sum of vector subspaces being also a topological direct sum is a matter limited to infinite dimension. There indeed the two concepts may relatively easily turn out to be distinct as can be illustrated by the space $V$ defined as the subspace of $C[0,1]$ (the space of all continuous functions with the uniform convergence topology) given by $V=\mbox{Span}(M,N)$ with $M$ the space of all polynomials and $N$ the span of some nonpolynomial continuous function, and endowed with the subspace topology relative to its inclusion in $C[0,1]$: indeed, $V=M\oplus N$ algebraically, however its topology is larger than the direct sum topology relative to the inherited topologies on $M$ and $N$.\footnote{It must be said that I encountered this example in an online discussion at \textit{https://math.stackexchange.com/questions/78917/example-of-a-topological-vector-space-such-that-e-m-oplus-n-algebraically}.} On the other hand, for some specific classes of infinite-dimensional vector spaces (such as, for instance, Banach spaces) it may turn out that for every member of that class any its decomposition into an algebraic direct sum is also a decomposition into a topological direct sum (see \cite{banach}).

\paragraph{Vector spaces with a ``smooth'' structure} Similarly to the case of topological vector spaces, if $V$ is a vector space endowed with some type of a smooth structure such as that of a finite-dimensional manifold (relative to which the vector space operations on $V$ are smooth), a Hilbert space, a Fr\'echet space, etc, and the subspaces $V_1$, $V_2$ of $V$ are such that they inherit from $V$ an appropriate smooth structure (as is automatic in many cases), then again one may wonder whether the corresponding product smooth structure on $V_1\oplus V_2$ coincides with that of $V$. Here again, if the dimension of $V$ is finite, one observes that $V$ is in particular a connected Abelian Lie group with respect to the addition operation; all such groups are of form $\mathbb{R}^k\times(\mathbb{S}^1)^l$ (\cite{abelianLie}) and so any such vector space must again be some $\mathbb{R}^n$ with its usual smooth structure. 

\paragraph{Finite dimension appearing} Contrary to the above-listed cases, it can be noted that in categories different from \textbf{Vect} a phenomenon in some ways similar may occur even in finite dimension. For instance, as follows from \cite{freedman82}, \cite{donaldson}, \cite{freedman84}, there exist smooth manifolds, such as the K3 surface, that admit topological decompositions (into a connected sum) but not smooth decompositions. The analogy is not wholly complete, however, since in general a connected sum is not the coproduct of its factors.

\paragraph{Diffeological vector spaces} Now, what happens when $C$ is the category of diffeological vector spaces, is that a finite-dimensional diffeological vector space (and a rather simple one at that) may decompose as an algebraic direct sum of two of its subspaces without being their (diffeological) smooth direct sum\footnote{Let $V$ be a diffeological vector space, and let $V_1$, $V_2$ be two of its vector subspaces such that the vector space underlying $V$ coincides with $V_1\oplus V_2$. The diffeological vector space $V$ is said to decompose as a smmoth sum of $V_1$ and $V_2$ if the diffeology on $V$ coincides with the product diffeology on $V_1\oplus V_2$. relative to the subset diffeologies on $V_1$ and $V_2$ inherited from $V$.} \cite{me2018}. Thus, the just-mentioned simple example indicates that a number of standard trivial properties involving direct sum decompositions of vector spaces cannot be guaranteed in the diffeological context, even for very simple finite-dimensional examples: a given direct sum decomposition may not be smooth (and checking whether one is or is not so may not be wholly trivial, as the example in the next section indicates), a given subspace may not --- \emph{a priori} --- be complemented\footnote{Meaning that it may not be part of any smooth direct sum decimposition of the ambient space.}, and, also \emph{a priori}, a given finite-dimensional diffeological vector space may not admit any smooth decomposition into a direct sum at all. We are not yet aware of specific examples as to the latter possibilities (we just give some indications as to how such examples could be obtained, possibly exploiting a certain conjectural property of so-called non-Baire functions, \emph{i.e.} functions not belonging to any Baire class, \cite{lebesgue}), although it is known (\cite{me2018}) that their consideration should be limited to the vector spaces with trivial diffeological dual: indeed, every finite-dimensional diffeological vector space $V$ admits a (non-unique) smooth decomposition into a direct sum of a subspace diffeomorphic to its diffeological dual (hence whose subset diffeology is standard) and the so-called \emph{maximal isotropic subspace}, which is defined as the intersection of kernels of all smooth linear $\mathbb{R}$-valued functions.

Still another host of issues regards the classic direct sum decomposition of a vector space $V$ relative to a given linear map $f:V\to W$, that as $V=\mbox{Ker}(f)\oplus\mbox{Im}(f)$. Unlike the standard context where the meaning of such equality is immediately understood, in the diffeological context and to our purposes much specification is needed as to what exactly should be meant by the subspace $\mbox{Im}(f)$. Indeed, by the standard definition $\mbox{Im}(f)$ is a subspace of the target space $W$ (and the above equality indicates in fact an isomorphism), and so its most natural diffeology is the subset diffeology relative to the diffeology on $W$. However, this diffeology, while guaranteed to contain the pushforward of the diffeology of $V$ by $f$, may easily turn out to be larger (which is an internal property of $W$). If this is the case, $V$ will certainly not be diffeomorphic to $\mbox{Ker}(f)\oplus\mbox{Im}(f)$ --- but for reasons fundamentally different from direct sums being or not being smooth.

Thus, the most natural way to pose the question appears to be that to ask whether $V$ is always diffeomorphic to $\mbox{Ker}(f)\oplus\mbox{Im}(f)$, where $\mbox{Im}(f)$ is endowed with the pushforward of the diffeology of $V$ by $f$. Still another angle on the same question relates it to the question of smoothness of direct sums. Indeed, $\mbox{Ker}(f)$ admits in general a multitude of algebraic complements in $V$; one may, on one hand, wonder whether any or even all of these complements, considered with the corresponding subset diffeologies, are diffeomorphic to $\mbox{Im}(f)$ with its pushforward diffeology --- and on the other hand whether any of these subspaces complements $\mbox{Ker}(f)$ smoothly, the two questions being \emph{a priori} distinct.

In what follows we first illustrate how even establishing a smoothness of a given direct sum decomposition of a diffeological vector space could be tricky, and then, after recalling the known example of a non-smooth direct sum decomposition, we provide some considerations regarding the existence of non-complemented subspaces, and then that of finite-dimensional diffeological vector spaces that do not admit any smooth decomposition into a direct sum.

\paragraph{Acknowledgment} I would like to thank the organizers of the \emph{Special session on recent advances in diffeology and their applications} for their kind invitation.

\section{When a maximal isotropic subspace is a (non-obvious) coproduct}

In this section we consider in detail the following example.

\begin{example}
Let $V$ be $\mathbb{R}^2$ endowed with the vector space diffeology generated by the following two maps:
$$p:\mathbb{R}\to V,\,p(x)=(|x|,|x|),$$
$$q:\mathbb{R}\to V,\,q(x)=(0,\Delta_{\mathbb{Q}}(x),$$
where $\Delta_{\mathbb{Q}}:\mathbb{R}\to\mathbb{R}$ acts by
$$\Delta_{\mathbb{Q}}(x)=\left\{\begin{array}{ll} 0, & \mbox{if }x\in\mathbb{Q},\\ 1, & \mbox{otherwise}\end{array}\right.\footnote{Obviously, $\Delta_{\mathbb{Q}}$ is related to the well-known Dirichlet function $D$ by $\Delta_{\mathbb{Q}}=1-D$. In fact, our choice to use $\Delta_{\mathbb{Q}}$ rather than the Dirichlet function turned out to be rather frivolous, since the same arguments as below, with very slight adjustments, work for both.}$$
\end{example}

We first observe that the space $V$ possesses the following (rather evident) property.

\begin{lemma}
The maximal isotropic subspace of $V$ coincides with $V$.
\end{lemma}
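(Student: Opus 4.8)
The plan is to show that every smooth linear functional $\ell : V \to \mathbb{R}$ is identically zero, since the maximal isotropic subspace is by definition the intersection of the kernels of all such functionals, so this will force that intersection to be all of $V$. First I would recall that a linear functional $\ell$ on $V$ is smooth precisely when its composition with each of the two generating plots $p$ and $q$ is a smooth (in the ordinary sense) map $\mathbb{R} \to \mathbb{R}$; more precisely, since the vector space diffeology of $V$ is generated by $p$ and $q$, a plot of $V$ is (locally) a finite sum $\sum_i r_i(u)\, g_i(h_i(u))$ where each $g_i \in \{p,q\}$, each $h_i$ is smooth, and each $r_i$ is a smooth real-valued function, so $\ell$ is smooth iff $\ell \circ p$ and $\ell \circ q$ are smooth. (This reduction is exactly the standard description of functionals on a diffeological vector space generated by a family of maps.)

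Next I would write $\ell(x_1,x_2) = a x_1 + b x_2$ for constants $a,b \in \mathbb{R}$ and examine the two compositions. We have $(\ell \circ p)(x) = a|x| + b|x| = (a+b)|x|$, which is smooth on $\mathbb{R}$ if and only if $a + b = 0$. Likewise $(\ell \circ q)(x) = b\, \Delta_{\mathbb{Q}}(x)$, which is nowhere continuous unless $b = 0$, hence smooth only if $b = 0$. Combining $a + b = 0$ with $b = 0$ gives $a = b = 0$, so $\ell \equiv 0$. Therefore the intersection of all kernels of smooth linear functionals is $V$ itself, which is the claim.

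The only point requiring a little care — and what I would regard as the main (minor) obstacle — is justifying the reduction in the first step: namely that smoothness of a linear map out of a vector-space-diffeology generated by a set of plots can be checked on the generators alone. I would handle this by appealing to the explicit form of the generated vector space diffeology (the smallest diffeology making $V$ a diffeological vector space and containing $p$ and $q$): its plots are locally of the form $u \mapsto \sum_i r_i(u)\, g_{j_i}(h_i(u))$ with $g_{j_i}\in\{p,q\}$, $r_i$ and $h_i$ smooth. Composing with the linear $\ell$ and using linearity, $\ell$ applied to such a plot is $\sum_i r_i(u)\,(\ell\circ g_{j_i})(h_i(u))$, which is smooth as soon as each $\ell\circ g_{j_i}$ is smooth; conversely $p$ and $q$ are themselves plots of $V$, so smoothness of $\ell$ forces $\ell\circ p$ and $\ell\circ q$ to be smooth. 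This two-sided implication is what licenses the computation above, and once it is in place the rest is the elementary observation about $|x|$ and $\Delta_{\mathbb{Q}}$.
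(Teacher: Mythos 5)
Your proof is correct and follows essentially the same route as the paper: compose a putative smooth linear functional with the generating plots $p$ and $q$, observe that $(a+b)|x|$ is smooth only if $a+b=0$ and $b\,\Delta_{\mathbb{Q}}$ is smooth only if $b=0$, and conclude $a=b=0$. Your extra care in justifying that smoothness need only be tested against the generators is welcome but not strictly needed here, since only the forward direction (plots compose with smooth maps to give smooth maps) is used.
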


\begin{proof}
Let $f:V\to\mathbb{R}$ be a smooth function, and let $f(e_i)=a_i$ for $i=1,2$. Then $(f\circ p)(x)=(a_1+a_2)|x|$, which is an ordinary smooth map if and only if $a_2=-a_1$. Since $f\circ q=a_2\Delta_{\mathbb{Q}}$, we immediately obtain $a_1=a_2=0$.
\end{proof}

The property of $V$ that we are most interested in is the following one.

\begin{thm}
The space $V$ decomposes as a smooth direct sum of $\mbox{Span}(e_1)$ and $\mbox{Span}(e_2)$, with the subset diffeology on $\mbox{Span}(e_1)$ being generated by the map $x\mapsto(|x|,0)$ and that on $\mbox{Span}(e_2)$ by maps $x\mapsto(0,|x|)$ and $x\mapsto(0,\Delta_{\mathbb{Q}}(x)$.
\end{thm}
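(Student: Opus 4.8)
The plan is to obtain an explicit description of the plots of $V$, read off from it the two subset diffeologies, and then verify that the associated projections are smooth.

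\emph{Step 1: a normal form for plots of $V$.} Write $\calD_{|\cdot|}$ for the vector space diffeology on $\mathbb{R}$ generated by $x\mapsto|x|$ and $\calD_\Delta$ for the one generated by $\Delta_{\mathbb{Q}}$; concretely, a parametrization lies in $\calD_{|\cdot|}$ (resp.\ in $\calD_\Delta$) if and only if it is locally a finite sum $\sigma+\sum_i\lambda_i\,|\phi_i|$ (resp.\ $\sigma+\sum_j\mu_j\,(\Delta_{\mathbb{Q}}\circ\psi_j)$) with $\sigma,\lambda_i,\phi_i,\mu_j,\psi_j$ ordinary smooth functions — here one uses that every vector space diffeology contains the standard one (since a smooth map into $\mathbb{R}^2$ is a smooth combination of constant maps, and vector space diffeologies are closed under such combinations), which is what legitimizes the smooth summand $\sigma$. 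Unwinding the definition of the generated vector space diffeology on $V$, and using that $p$ feeds the \emph{same} term $\lambda_i|\phi_i|$ into both coordinates while $q$ only affects the second, one gets the following: a parametrization $P=(P_1,P_2)\colon U\to\mathbb{R}^2$ is a plot of $V$ if and only if $P_1\in\calD_{|\cdot|}$ and $P_2-P_1\in\calD_\Delta$ (equivalently $P_1-P_2\in\calD_\Delta$, these diffeologies being stable under negation). This equivalence is an honest ``if and only if'': the forward direction just collects the two coordinates of a local expression $\sum_i\lambda_i(p\circ\phi_i)+\sum_j\mu_j(q\circ\psi_j)+(\sigma_1,\sigma_2)$, and the backward direction reassembles such an expression from local presentations of $P_1$ and of $P_2-P_1$.

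\emph{Step 2: the key inclusion $\calD_{|\cdot|}\subseteq\calD_\Delta$.} This reduces to exhibiting the single map $x\mapsto|x|$ as a plot of $\calD_\Delta$, i.e.\ to writing, on a neighbourhood of $0$, $|x|=\sigma(x)+\sum_{j=1}^n\mu_j(x)\,\Delta_{\mathbb{Q}}(\psi_j(x))$ with $\sigma,\mu_j,\psi_j$ smooth (away from $0$ there is nothing to do, since $|x|$ is smooth there). I expect this to be the genuine content of the theorem and the main obstacle: one has to choose the smooth functions $\psi_j$ so that the $\{0,1\}$-valued functions $\Delta_{\mathbb{Q}}\circ\psi_j$, weighted by the smooth $\mu_j$, build up a true corner at $0$ — a natural attempt is to use some $\psi_j$ that are constant with a rational value on one side of $0$ and take irrational values on the other side, together with corrective terms that absorb the exceptional countable sets on which the recipe would otherwise fail; the remark in the footnote that the Dirichlet function works equally well suggests the construction is robust under such adjustments. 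Once $x\mapsto|x|\in\calD_\Delta$ is established, and since $\calD_\Delta$ is a vector space diffeology containing every smooth map, it contains every finite sum $\sigma+\sum_i\lambda_i|\phi_i|$, that is, $\calD_{|\cdot|}\subseteq\calD_\Delta$.

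\emph{Step 3: conclusion.} By Step 2 the condition ``$P_1\in\calD_{|\cdot|}$ and $P_2-P_1\in\calD_\Delta$'' of Step 1 is equivalent to ``$P_1\in\calD_{|\cdot|}$ and $P_2\in\calD_\Delta$'' (add or subtract the $\calD_\Delta$-plot $P_1$, which lies in $\calD_\Delta$ by Step 2), and this is precisely the product diffeology on $\mathrm{Span}(e_1)\times\mathrm{Span}(e_2)$ once $\mathrm{Span}(e_1)$ is given $\calD_{|\cdot|}$ and $\mathrm{Span}(e_2)$ is given $\calD_\Delta$. Hence the (always smooth) addition map $\mathrm{Span}(e_1)\oplus\mathrm{Span}(e_2)\to V$ is a diffeomorphism, so $V$ is their smooth direct sum. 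Feeding the constraints of Step 1 to parametrizations with image in $\mathrm{Span}(e_1)$, resp.\ in $\mathrm{Span}(e_2)$, shows that the induced subset diffeology on $\mathrm{Span}(e_1)$ is $\calD_{|\cdot|}$, i.e.\ the one generated by $x\mapsto(|x|,0)$, and that the induced subset diffeology on $\mathrm{Span}(e_2)$ is $\calD_\Delta$, which by Step 2 is unchanged by throwing in the extra generator $x\mapsto(0,|x|)$, hence is generated by $x\mapsto(0,|x|)$ together with $x\mapsto(0,\Delta_{\mathbb{Q}}(x))$, as claimed. Alternatively, for the direct-sum statement alone it is enough to observe, via Step 1 and Step 2, that the projection $(a,b)\mapsto(a,0)$ sends plots of $V$ to plots of $V$; then $(a,b)\mapsto(0,b)=\mathrm{id}-\pi$ is automatically smooth since $V$ is a diffeological vector space. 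In all of this, Steps 1 and 3 are bookkeeping with the normal form, and Step 2 is where the real work sits.
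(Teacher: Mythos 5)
Your Steps 1 and 3 are correct and are essentially the bookkeeping the paper leaves implicit: the plots of $V$ are exactly the pairs $(P_1,P_2)$ with $P_1\in\calD_{|\cdot|}$ and $P_2-P_1\in\calD_{\Delta}$, and once one knows $\calD_{|\cdot|}\subseteq\calD_{\Delta}$ everything else (the product description, the two subset diffeologies, the smoothness of the projections) follows as you say. You have also correctly located the crux in Step 2: the paper itself states that ``the essence of the proof'' is precisely the fact that $|x|$ lies in the vector space diffeology generated by $\Delta_{\mathbb{Q}}$. But that is exactly the step you do not prove, so the argument has a genuine gap where all the real work sits.

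Moreover, the construction you gesture at cannot work in the form you describe. A smooth $\psi$ that is ``constant with a rational value on one side of $0$ and takes irrational values on the other side'' does not exist: by the intermediate value theorem a continuous function that is non-constant on $(0,\epsilon)$ takes rational values there, so to avoid $\mathbb{Q}$ it would have to be constant on that side too, and then it cannot match a different (rational) constant on the other side. The actual mechanism is a cancellation in pairs: one takes $H_1(x)=e^{-1/x^2}$ for $x>0$ and $H_1(x)=0$ for $x\le 0$, and $H_2=\bar\gamma\circ H_1$, where $\bar\gamma$ is built from an affine map onto $(\tfrac{1}{\sqrt2},1)$ composed with an analytic monotone self-map of $(0,1)$ (Franklin) carrying $\mathbb{Q}\cap(0,1)$ onto the preimage of the rationals of the target interval. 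The point is that for $x>0$ one has $H_1(x)\in\mathbb{Q}\iff H_2(x)\in\mathbb{Q}$, so $\Delta_{\mathbb{Q}}(H_1(x))-\Delta_{\mathbb{Q}}(H_2(x))\equiv 0$ there, while for $x\le 0$ one has $H_1\equiv 0\in\mathbb{Q}$ and $H_2\equiv\tfrac{1}{\sqrt2}\notin\mathbb{Q}$, so the difference is $-1$; this yields the identity $|x|=2x\,\Delta_{\mathbb{Q}}(H_1(x))-2x\,\Delta_{\mathbb{Q}}(H_2(x))+x$. Without this (or an equivalent) explicit device --- in particular, without the nontrivial input that an analytic map can carry one countable dense set onto another --- Step 2, and hence the theorem, remains unestablished.
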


\begin{proof}
The essence of the proof consists in establishing the following equality:
$$|x|=2x\Delta_{\mathbb{Q}}(H_1(x))-2x\Delta_{\mathbb{Q}}(H_2(x))+x$$
for all $x\in\mathbb{R}$ and two specific functions $H_1,H_2:\mathbb{R}\to\mathbb{R}$ that we now define. The function $H_1$ is given by 
$$H_1(x)=\left\{\begin{array}{ll} e^{-\frac{1}{x^2}} & x>0 \\ 0 & x\leqslant 0.\end{array}\right.$$
To define $H_2$ we need certain auxiliary maps. Specifically, we define $w:(0,1)\to(\frac{1}{\sqrt2},1)$ by setting $w(x)=\frac{\sqrt2-1}{\sqrt2}x+\frac{1}{\sqrt2}$. Next, consider the countable sets $\{a_i\}=(0,1)\cap\mathbb{Q}$ and $\{b_j\}=w^{-1}\left((\frac{1}{\sqrt2},)\cap\mathbb{Q}\right)$; now, it is established in \cite{franklin} that there exists a smooth (actually analytic) monotone map $f:(0,1)\to(0,1)$ such that $f$ maps the set $\{a_i\}$ onto the set $\{b_j\}$ (that is, $f$ is a map $(0,1)\to(0,1)$ that sends rational points to rational points, and irrational ones, to irrational).

Let now $\gamma=w\circ f$, and define $\bar{\gamma}:[0,1)\to\mathbb{R}$ by $\bar{\gamma}(x)=\left\{\begin{array}{ll} \gamma(x) & x\in(0,1) \\ \frac{1}{\sqrt2} & x=0 \end{array}\right.$; observe that, since $\gamma$ is analytic at $0$, so is $\bar{\gamma}$. Finally, define 
$$H_2=\bar{\gamma}\circ H_1.$$
Observe that, since $f$ is analytic at $0$ (\cite{franklin}), $H_2$ is smooth.

The equality $|x|=2x\Delta_{\mathbb{Q}}(H_1(x))-2x\Delta_{\mathbb{Q}}(H_2(x))+x$ is now a simple consequence of the choice of the functions $H_1$ and $H_2$. Indeed, since $H_1(x)\in\mathbb{Q}$ if and only if $H_2(x)\in\mathbb{Q}$, we have $2x\Delta_{\mathbb{Q}}(H_1(x))-2x\Delta_{\mathbb{Q}}(H_2(x))=0$ for all $x>0$, while, since $H_1(x)\equiv 0$ and $H_2(x)\equiv\frac{1}{\sqrt2}$ for $x\leqslant 0$, it equals $-2x$ for $x\leqslant 0$. Hence the entire sum is $x$ for postive x and $-x$ for all other $x$, that is, is the absolute value function.
\end{proof}

\begin{rem}
As a mere curiosity, we observe that substituting in the expression of $|x|$ through $\delta_{\mathbb{Q}}$'s, in place of $x$ and $2x$, any other smooth functions $f$ and $g$, we obtain a map equaling $f$ on $(0,\infty)$ and $f-g$ on $(-\infty,0]$, therefore, utilizing if necessary appropriate translations, any piecewise-smooth function with one singularity can be expressed in an analogous way through $\Delta_{\mathbb{Q}}$.
\end{rem}

In addition, we can also conclude the following.

\begin{cor}
The subset diffeology on any non-zero subspace of $V$ is non-standard.
\end{cor}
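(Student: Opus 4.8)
The plan is to reduce the whole statement to the following elementary observation: the standard (fine) diffeology on a finite-dimensional vector space is the one whose plots are exactly the ordinary smooth maps, so as soon as one exhibits a single plot of the inherited diffeology that is not smooth, the subset diffeology in question cannot be standard. The non-zero subspaces of $V=\mathbb{R}^2$ are $V$ itself and the lines $\mbox{Span}(v)$ through the origin, $v=(v_1,v_2)\neq(0,0)$. The case of $V$ is immediate: its own diffeology contains the generator $p:x\mapsto(|x|,|x|)$, which is not smooth, so $V$ is non-standard. It therefore remains to treat the one-dimensional subspaces, and this is where the Theorem is used.

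The key step is to notice that, by the Theorem, both $x\mapsto(|x|,0)$ and $x\mapsto(0,|x|)$ are plots of $V$: the Theorem states that the first generates the subset diffeology of $\mbox{Span}(e_1)$ and that the second is among the generators of the subset diffeology of $\mbox{Span}(e_2)$, and a plot of a subset diffeology is by definition a plot of the ambient space taking values in that subset. Since $V$ is a diffeological vector space, any $\mathbb{R}$-linear combination of plots is again a plot, so
$$u\mapsto v_1\cdot(|u|,0)+v_2\cdot(0,|u|)=|u|\cdot(v_1,v_2)$$
is a plot of $V$; moreover it takes values in $\mbox{Span}(v)$, hence it is a plot of the subset diffeology of $\mbox{Span}(v)$. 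Under the linear identification $\mbox{Span}(v)\cong\mathbb{R}$, $tv\mapsto t$, this plot becomes $u\mapsto|u|$, which is not smooth; since the plots of the standard diffeology on a one-dimensional vector space are exactly the smooth $\mathbb{R}$-valued maps, this shows that the subset diffeology of $\mbox{Span}(v)$ is not standard. Letting $v$ range over all non-zero vectors covers every line (including $\mbox{Span}(e_1)$ and $\mbox{Span}(e_2)$ themselves), and together with the case of $V$ this proves the corollary.

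I expect no genuinely hard step here once the Theorem is available: the whole non-trivial content --- the fact that $(|x|,0)$, equivalently $(0,|x|)$, is a plot of $V$ at all --- is precisely what the $H_1$--$H_2$ construction in the proof of the Theorem supplies, and the rest is bookkeeping (the reduction to producing a single non-smooth plot, and checking that the linear-combination trick reaches every line). The only point to be handled carefully is the purely formal one of keeping apart the subset diffeology a line carries inside $V$ from the standard diffeology it would carry as a submanifold of $\mathbb{R}^2$, and observing that the displayed plot witnesses their difference. I would also add a closing remark that this corollary refines the earlier Lemma: not merely $V$, but \emph{every} non-zero subspace of $V$ fails to be diffeomorphic to a standard $\mathbb{R}^k$.
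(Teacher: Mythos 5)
Your proof is correct and follows essentially the same route as the paper: extract the plots $x\mapsto(|x|,0)$ and $x\mapsto(0,|x|)$ from the Theorem, use the vector space diffeology to form $u\mapsto|u|\,(v_1,v_2)$, and observe this is a non-smooth plot of the subset diffeology of the line it spans (the paper cites the proof of the Theorem rather than its statement, and leaves the case $W=V$ implicit, but these are cosmetic differences).
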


\begin{proof}
Let $W$ be a (proper) subspace of $V$, and let $(a,b)$ be a  generator of it. It follows directly from the proof of the above theorem that the diffeology of $V$ contains plots of form $x\mapsto(|x|,0)$ and $x\mapsto(0,|x|)$; being a vector space diffeology, it therefore contains a plot of form $x\mapsto(a|x|,b|x|)$, which is a non-standard plot for the subset diffeology of~$V$.
\end{proof}

\section{Diffeological vector spaces not being coproducts}

We now turn to nonsmooth direct sums.

\paragraph{A nonsmooth direct sum decomposition} Let $V$ be $\mathbb{R}^3$ endowed with the vector space diffeology generated by the plot $p:\mathbb{R}\to V$ given by $p(x)=|x|(e_2+e_3)$. It was shown in \cite{me2018} that the subset diffeologies on its subspaces $V_0=\mbox{Span}(e_1,e_2)$ and $V_1=\mbox{Span}(e_3)$ are both standard, hence, while $V$ is an algebraic direct sum of $V_0$ and $V_1$, it is not their smooth direct sum.

In fact, for diffeological vector spaces non-smooth direct sum decompositions are a rather frequent phenomenon. To make this claim more precise, recall that, as was shown in \cite{me2018}, every finite-dimensional diffeological vector space $V$ contains at least one subspace that is maximal for the folowing two properties: its subset diffeology is standard, and it splits off as a smooth direct summand (however, contrary to what was erroneously claimed in \cite{me2018}, there is in general more than one such subspace). Any such subspace is called a \emph{characteristic subspace} of $V$, and it possesses the following property.

\begin{lemma}
Let $V_0$ be a characteristic subspace of $V$, and let $V=V_0\oplus V_1$ be a smooth direct sum decomposition of $V$. Then $V_1$ coincides with the maximal isotropic subspace of~$V$.
\end{lemma}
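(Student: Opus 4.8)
The plan is to establish the two inclusions $V_{\mathrm{iso}}\subseteq V_1$ and $V_1\subseteq V_{\mathrm{iso}}$ separately, where $V_{\mathrm{iso}}$ denotes the maximal isotropic subspace of $V$, i.e. the intersection of the kernels of all smooth linear functionals $V\to\mathbb{R}$. Throughout I will use the elementary fact that for a smooth direct sum $V=V_0\oplus V_1$ the two projections $\pi_0\colon V\to V_0$ and $\pi_1\colon V\to V_1$ are smooth, so that a linear functional on $V$ is smooth precisely when both of its restrictions to $V_0$ and to $V_1$ are smooth; in particular, restriction along the inclusion $V_j\hookrightarrow V$ maps the diffeological dual of $V$ onto that of $V_j$, a right inverse being given by precomposition with $\pi_j$ (extension by zero).

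For the inclusion $V_{\mathrm{iso}}\subseteq V_1$, take $v\in V_{\mathrm{iso}}$ and write $v=v_0+v_1$ with $v_j\in V_j$. For every linear $h\colon V_0\to\mathbb{R}$ the functional $h\circ\pi_0$ is smooth on $V$, hence annihilates $v$, which gives $h(v_0)=0$. Since $V_0$ is a characteristic subspace, its subset diffeology is standard, so its diffeological dual is the full algebraic dual and therefore separates points; thus $v_0=0$ and $v=v_1\in V_1$.

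The reverse inclusion $V_1\subseteq V_{\mathrm{iso}}$ is the heart of the matter, and it is here that the maximality built into the notion of a characteristic subspace is used. By the surjectivity remark above, $V_1\subseteq V_{\mathrm{iso}}$ is equivalent to the diffeological dual of $V_1$ being trivial; suppose it is not. Applying to $V_1$ the structural fact recalled before the statement, $V_1$ admits a smooth direct sum decomposition $V_1=D_1\oplus(V_1)_{\mathrm{iso}}$ in which $D_1$ has standard subset diffeology and $\dim D_1=\dim(\text{diffeological dual of }V_1)>0$, so $D_1\neq 0$. I then claim that the subspace $W:=V_0\oplus D_1$ of $V$ has standard subset diffeology and splits off as a smooth direct summand of $V$; since $W\supsetneq V_0$, this contradicts the maximality of $V_0$, and hence the diffeological dual of $V_1$ must be trivial after all.

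To prove the claim I would chase plots through the two smooth direct sum decompositions $V=V_0\oplus V_1$ and $V_1=D_1\oplus(V_1)_{\mathrm{iso}}$, together with transitivity of subset diffeologies: a plot of $V$ is a pair consisting of a plot of $V_0$ and a plot of $V_1$, the latter being in turn a pair of a plot of $D_1$ and a plot of $(V_1)_{\mathrm{iso}}$; regrouping these identifications shows that the subset diffeology of $W$ in $V$ is the product of the (standard) subset diffeologies of $V_0$ and $D_1$, hence standard, and simultaneously that $V=W\oplus(V_1)_{\mathrm{iso}}$ is a smooth direct sum, proving both parts of the claim. Combining the two inclusions then yields $V_1=V_{\mathrm{iso}}$. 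I expect the conceptual crux to be the reverse inclusion, and specifically the idea of enlarging $V_0$ by a standard-diffeology summand of $V_1$; once that is in place the plot-chasing verifying that $W$ inherits both defining properties of a characteristic subspace is routine, and the only nonformal input is the already-recalled existence of the decomposition $V_1=D_1\oplus(V_1)_{\mathrm{iso}}$.
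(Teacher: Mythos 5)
Your proof is correct, but the mechanism you use for the crucial inclusion $V_1\subseteq V_{\mathrm{iso}}$ is genuinely different from the paper's. The paper argues by a dual-dimension count: if some smooth linear $f\colon V\to\mathbb{R}$ had $f|_{V_1}\neq 0$, then smoothness of the decomposition makes $f'=0\oplus f|_{V_1}$ a smooth linear functional vanishing on $V_0$ but not on $V_1$; since every linear functional on the standard subspace $V_0$ also extends (by zero on $V_1$) to a smooth functional on $V$, this would force the diffeological dual of $V$ to have dimension strictly larger than $\dim V_0$, contradicting the result of the cited reference identifying the dual's dimension with that of a characteristic subspace. You instead exploit the maximality built into the definition of a characteristic subspace: assuming the dual of $V_1$ nontrivial, you apply the structure theorem to $V_1$ itself to extract a nonzero standard summand $D_1$, and show by regrouping the two smooth decompositions that $W=V_0\oplus D_1$ has standard subset diffeology and splits off smoothly, contradicting the maximality of $V_0$. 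Both routes rest on the smoothness of the projections and on the structure theorem of the cited reference, but yours trades the paper's one-line dimension count for an explicit enlargement argument plus plot-chasing, which you sketch correctly (transitivity of subset diffeologies and associativity of the product diffeology give both that the subset diffeology of $W$ is the product of two standard ones and that $V=W\oplus(V_1)_{\mathrm{iso}}$ is smooth); in exchange, your argument does not need to know that the dual of $V$ has the same dimension as $V_0$, only that $V_0$ is maximal. A small bonus of your write-up is that you spell out the inclusion $V_{\mathrm{iso}}\subseteq V_1$, which the paper leaves implicit; your argument for it, via smoothness of $h\circ\pi_0$ and the fact that the functionals on the standard $V_0$ separate points, is the natural one.
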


\begin{proof}
Let $f:V\to\mathbb{R}$ be a smooth function, and suppose that $f|_{V_1}\neq 0$. Since the decomposition $V=V_0\oplus V_1$ is smooth, the map $f'=0\oplus f|_{V_1}$ is a smooth linear function, which implies that the dimension of the diffeological dual of $V$ is strictly greater than that of $V_0$, which contradicts \cite{me2018}.
\end{proof}

Thus, there are numerous diffeological vector spaces admitting non-smooth decompositions.

\begin{cor}
If the dimension and the codimension of the maximal isotropic subspace of a finite-dimensional diffeological vector space $V$ are both positive then $V$ admits at least one smooth decomposition.
\end{cor}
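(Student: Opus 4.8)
\noindent The plan is simply to exhibit the smooth decomposition already guaranteed by \cite{me2018} and to verify that, under the stated hypotheses, both of its summands are nonzero, so that it counts as a genuine (nontrivial) smooth direct sum decomposition --- which is what the statement means by ``admits at least one smooth decomposition'' (recall that the discussion of the first section explicitly raises the possibility that a finite-dimensional diffeological vector space ``may not admit any smooth decomposition into a direct sum at all'', a possibility that only makes sense once the trivial decompositions $V=V\oplus\{0\}$ and $V=\{0\}\oplus V$ are excluded). Throughout, write $W$ for the maximal isotropic subspace of $V$.

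\noindent First I would invoke the existence, recalled above from \cite{me2018}, of a characteristic subspace $V_0$ of $V$. By the very definition of a characteristic subspace, $V_0$ splits off as a smooth direct summand, so $V=V_0\oplus V_1$ for some vector subspace $V_1$, all subspaces being taken with their subset diffeologies; this is the decomposition we shall keep. Applying the preceding Lemma to this decomposition gives $V_1=W$, and hence $\dim V_1=\dim W$ and $\dim V_0=\dim V-\dim W=\mathrm{codim}\,W$. Then the two hypotheses close the argument: $\dim W>0$ forces $V_1\neq\{0\}$, while $\mathrm{codim}\,W>0$ forces $V_0\neq\{0\}$; therefore $V=V_0\oplus V_1$ is a smooth direct sum decomposition into two nonzero (equivalently, proper) subspaces, as required. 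One could equally well bypass the Lemma and use instead the other consequence of \cite{me2018} recalled in the first section, namely that $V$ is always the smooth direct sum of a subspace diffeomorphic to its diffeological dual and of $W$; the dimension count, and hence the conclusion, is verbatim the same.

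\noindent I do not anticipate any real obstacle: the entire content lies in quoting the structure theorem of \cite{me2018} accurately and in reading ``decomposition'' as meaning a nontrivial one. The single point deserving a word of care is the boundary behavior --- without the positivity hypotheses the only smooth decomposition one can guarantee may be a trivial one, namely $V=V\oplus\{0\}$ exactly when $W=\{0\}$ and $V=\{0\}\oplus V$ exactly when $W=V$ (i.e.\ when the diffeological dual of $V$ is trivial) --- which is precisely why the hypotheses are imposed; it is worth adding a sentence to this effect, and remarking, in view of the non-uniqueness of characteristic subspaces, that there may in fact be several such decompositions.
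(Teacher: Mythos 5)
Your argument is internally coherent, but it proves the wrong statement: the corollary as printed contains a typo. The intended claim --- as the sentence immediately preceding it (``there are numerous diffeological vector spaces admitting non-smooth decompositions''), the theme of the whole section, and above all the paper's own proof make unambiguous --- is that under these hypotheses $V$ admits at least one \emph{non-smooth} direct sum decomposition. Read literally, as you read it, the corollary is little more than a restatement of the structure result of \cite{me2018} already recalled in the introduction (every finite-dimensional diffeological vector space is smoothly the direct sum of a subspace diffeomorphic to its diffeological dual and its maximal isotropic subspace) together with the observation that the two positivity hypotheses make both summands nonzero; you notice this yourself when you remark that the Lemma can be bypassed entirely and the conclusion read off from the structure theorem. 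That should have been a signal that something else was meant.

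The intended argument uses exactly the two ingredients you quote, but points them in the opposite direction. Let $V_0$ be a characteristic subspace and $W$ the maximal isotropic subspace. By the Lemma, $W$ is the \emph{only} subspace that can complement $V_0$ smoothly. The hypotheses give $\dim V_0=\mathrm{codim}\,W>0$ and $\mathrm{codim}\,V_0=\dim W>0$, so $V_0$ is a nonzero proper subspace and therefore admits infinitely many algebraic complements (the set of complements is an affine space modeled on the nonzero space $\mathrm{Hom}(V/V_0,V_0)$). Every complement $V_1\neq W$ then yields a decomposition $V=V_0\oplus V_1$ that is algebraic but, by the contrapositive of the Lemma, not smooth. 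So the positivity hypotheses are doing real work here: not merely excluding the trivial decompositions $V=V\oplus\{0\}$, as in your reading, but guaranteeing that $V_0$ has more than one algebraic complement, hence at least one non-smooth decomposition.
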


\begin{proof}
As a (by assumption) proper subspace of $V$, any chosen characteristic subspace has a multitude of algebraic direct complements, only one of which coincides with the maximal isotropic subspace, all the others giving rise to non-smooth direct sum decompositions of~$V$.
\end{proof}

\paragraph{A non-complemented subspace} The discussion in the subsequent sections is contingent on the following assumption: \emph{suppose that there exists a non-smooth function $\gamma:\mathbb{R}\to\mathbb{R}$ such that, if $D_{\gamma}$ is the vector space diffeology on $\mathbb{R}$ generated by $\gamma$ and $D_{|\cdot|}$ is the vector space diffeology on $\mathbb{R}$ generated by the absolute value function, then $D_{\gamma}\cap D_{|\cdot|}$ is the standard diffeology on $\mathbb{R}$}.

Assuming this, we can easily conclude that there do exist finite-dimensional diffeological vector spaces containing non-complemented subspaces and, later, that there exist ones that do not admit any smooth direct sum decomposition.

\begin{example}
Let $\gamma$ be a function as in the assumption, and let $V$ be $\mathbb{R}^2$ endowed with the vector space diffeology generated by the following two plots: $p:x\mapsto(\gamma(x),\gamma(x))$ and $q:x\mapsto(0,|x|)$. Observe first that $V$ coincides with its maximal isotropic subspace; indeed, let $f:V\to\mathbb{R}$ defined by $f(e_1)=a$, $f(e_2)=b$, be a smooth linear function. Then the functions $x\mapsto(a+b)\gamma(x)$, $x\mapsto b|x|$ are ordinary smooth functions, which readily implies that $a=b=0$.

Observe now that the subspace $\mbox{Span}(e_1)$ of $V$ has standard subspace diffeology and therefore it follows from the above that it is not complemented. Indeed, a generic plot of $V$ locally has form $\left(\sum_{i=1}^kh_i\cdot(\gamma\circ H_i)+\alpha_1,\sum_{i=1}^kh_i\cdot(\gamma\circ H_i)+\sum_{j=1}^lf_j|F_j|+\alpha_2\right)$, where $\alpha_1,\alpha_2,h_i,H_i,f_j,F_j$ are some smooth functions $U\to\mathbb{R}$ for some domain $U\subseteq\mathbb{R}^n$. For this to be a plot of the subset diffeology of $\mbox{Span}(e_1)$ we must have $\sum_{i=1}^kh_i\cdot(\gamma\circ H_i)+\sum_{j=1}^lf_j|F_j|+\alpha_2\equiv 0$, that is, $\sum_{i=1}^kh_i\cdot(\Gamma\circ H_i)=-\sum_{j=1}^lf_j|F_j|-\alpha_2$. Now, the function on the right belongs to $D_{|\cdot|}$, while the one on the left belongs to $D_{\gamma}$, hence by assumption they are both smooth, which implies the desired conclusion.
\end{example}

Notice however that $V$, although (presumably) containing a non-complemented subspace, does admit an obvious decomposition into a smooth direct sum, that as $\mbox{Span}(e_1+e_2)\oplus\mbox{Span}(e_2)$.

Regarding the plausibility of the assumption stated on the beginning of the section and on which our conjectural example depends, we suggest that $\gamma$ could be a non-Baire (not belonging to any Baire class) function (for instance, if one assumes the axiom of choice, it could be the indicator function of a non-measirable set. Even without this axiom, there do exist Lebesgue-measurable functions that are not Borel-measurable --- again, indicator functions of sets with this property ---, which therefore again do not belong to any Baire class, see for instance \cite{brown}). It should on the other hand be noted that our assumption is not as trivial as one might perhaps at first glance exprct: for instance, it is known, for Baire 1 functions, that the class of a Baire function is not necessarily preserved by the operations of pre-composition with smooth functions, multiplication by such, and summation,\footnote{It is preserved by the latter two operations \cite{baireLin-comb}, but not necessarily by that of the pre-composition, there being a certain field of study of the relevant issue, see for instance \cite{baireKM}, \cite{baireZ}.} and we ourselves give in the previous section an example of of a Baire 2 function being rendered a Baire 0 function by precisely these operations. Yet, it can be noticed that, if it were true that any subspace of a finite-dimensional diffeological vector space is complemented, our example would imply that any function $\mathbb{R}\to\mathbb{R}$ whatsoever could be made continuous via these operations.

Still independently of such considerations, an attempt to construct an explicit example of a non-complemented subspace could be made as follows. Let $V$ be $\mathbb{R}^2$ endowed with the vector space diffeology generated by the plot $x\mapsto\left(\Delta_{\mathbb{Q}}(x),\Delta_{\mathbb{Q}}(\sqrt{|x|})\right)$. Observe first that the maximal isotropic subspace of $V$ coincidds with $V$. Indeed, let $f$, $f(e_1)=a$, $f(e_2)=b$, be a smooth linear function on $V$, then $x\mapsto a\Delta_{\mathbb{Q}}+b\Delta_{\mathbb{Q}}(\sqrt{|x|})$ is an ordinary smooth function $\mathbb{R}\to\mathbb{R}$. Since the set of irrational numbers is dense in $\mathbb{R}$, it must be a constant function with value $a+b$. On the other hand, since $\mathbb{R}$ contains both rational numbers with irrational root and those with rational root, this value must be equal to both $b$ and $0$, respectivly, hence $a=b=0$.

Observe next that the result just obtained implies that if $V$ contains a subspace with the subset diffeology that is standard, that subspace cannot split off as a smooth direct summand, and consider the subspace $\mbox{Span}(e_1)$. It is trivial to observe that this subspace having standard subset diffeology depends on the following, at the moment conjectural, implication holding for all smooth functions $h_i,H_i$:
$$\sum_{i=1}^kh_i(\Delta_{\mathbb{Q}}\circ \sqrt{|H_i|})\mbox{ is smooth }\Rightarrow\sum_{i=1}^kh_i(\Delta_{\mathbb{Q}}\circ H_i)\mbox{ is smooth }.$$
Whether this implication does actually hold is at the moment work in progress.\footnote{The hope of it being true is based on potential use of arguments akin to those involved in the study of recoverability of Baire fuctions (see for instance \cite{baireRecover}, \cite{baireRecover2}, although there regard Baire 1 functions): the potential recoverability of $\sum_{i=1}^kh_i(\Delta_{\mathbb{Q}}\circ \sqrt{|H_i|})$ from its values on certain specific sets and the assumption of it being smooth might impose certain conditions (local nullity conditions, in fact) on the coefficient functions $h_i$ sufficient to ensure the smoothness of $\sum_{i=1}^kh_i(\Delta_{\mathbb{Q}}\circ H_i)$.}

\paragraph{Non-decomposable vector spaces} Let $\gamma$ be a function as in the previous section, \emph{i.e.}, satisfying the assumption that $D_{\gamma}\cap D_{|\cdot|}$ is the standard diffeology, whose existence we again assume. Then this yields the following example of a diffeological vector space that does not admit any non-trivial decompositions into a smooth direct sum. 

\begin{example}
Let $W$ be $\mathbb{R}^2$ endowed with the vector space diffeology generated by the map $x\mapsto(|x|,\gamma(x))$. Observe first that the maximal isotropic subspace of $W$ coincides with $W$; indeed, if $f:W\to\mathbb{R}$ is a smooth linear map defined by $f(e_1)=a$, $f(e_2)=b$ then $x\mapsto a|x|$, $x\mapsto b\gamma(x)$ are ordinary smooth maps, which immediately implies $a=b=0$. 

We claim that every proper (nontrivial) subspace of $W$ has standard diffeology. Indeed, let $ce_1+de_2$ be a generator of such a subspace. A generic plot of W locally is a function of form $\left(\sum_{i=1}^kh_i|H_i|+\alpha_1,\sum_{i=1}^kh_i(\gamma\circ H_i)+\alpha_2\right)$ for some smooth maps $\alpha_1,\alpha_2,h_i,H_i:U\to\mathbb{R}$. For this to be a plot of the subset diffeology on $\mbox{Span}(ce_1+de_2)$ we must have $d\sum_{i=1}^kh_i|H_i|+d\alpha_1=c\sum_{i=1}^kh_i(\gamma\circ H_i)+c\alpha_2$. However, the function on the left belongs to $D_{|\cdot|}$, while that on the right, to $D_{\gamma}$, therefore by the assumption both of them are smooth. Thus, every one-dimensional subspace of $W$ has standard diffeology, and since the maximal isotropic subspace of $W$ is $W$ itself, no non-trivial subspace of $W$ with standard subbset diffeology splits off as a smooth direct summand. Therefore $W$ does not admit nontrivial decompositions into a smooth direct sum.
\end{example}

It is quite obvious that the choice of $|\cdot|$ is relatively arbitrary, and the same procedure as in the example above and that in the preceding section would work for any pair of non-smooth functions $\gamma_1,\gamma_2$ such that $D_{\gamma_1}\cap D_{\gamma_2}$ is the standard diffeology of $\mathbb{R}$. Since we are not aware of a formal proof of existence or non-existence of such functions, we again notice that a different kind of space, which in the previous section was conjectured to contain a non-complemented subspace, should this latter conjecture turn out to be correct, would turn out to be non-decomposable as well. Indeed, it is easy to see that the conjectural implication stated in the previous section would also imply that the subset diffeology on any proper subspace of the space in question be standard, and by the same reasoning the space would be nondecomposable.

\begin{rem}
As a side remark, the question of a given (finite-dimensional) diffeological space admitting at least one decomposition into a smooth direct sum of its subspaces can be also stated in the following terms. Let $V$ be the space in question, and let $\mathcal{D}_p$ be the category of its plots (\emph{i.e.} the category whose objects are plots of $V$ and whose arrows are commutative triangles of form 
\[
\begin{tikzcd}
U \arrow{rr}{f} \arrow[swap]{dr}{p} & & U' \arrow{dl}{q} \\ [5pt] & V
\end{tikzcd},
\]
where $p$ and $q$ are plots of $V$, and $f:U\to U'$ is an ordinary smooth map; this category is amply used in diffeology, for instance in studying the D-topology on diffeological spaces \cite{csw-top}, defining their tangent spaces and tangent bundles \cite{cw-tangent}, defining diffeology on the Milnor classifying space of a diffeological group \cite{mw}, defining sheaves for diffeological spaces \cite{krepskiMW}, and so on). 

Let $V_0$, $V_1$ be a pair of subspaces of $V$, and let $\mathcal{D}_p^0$ and $\mathcal{D}_p^1$ be respectively the categories of plots of the subset diffeologies on $V_0$ and $V_1$. Consider the subcategory $\mathcal{D}_p^{0,1}$ of the product category $\mathcal{D}_p^0\times\mathcal{D}_p^1$ whose objects are pairs $(p_0,p_1)$ such  that $p_0$ and $p_1$ have the same domain.\footnote{Obviously, $\mathcal{D}_p^{0,1}$ is just the equalizer in the category \textbf{Cat} of all small categories of the two functors $F_0,F_1:\mathcal{D}_p^0\times\mathcal{D}_p^1\to\mbox{\textbf{Cart}}$ into the category of all domains in Cartesian spaces, that are given by the compositions of the functor $\mathcal{D}_p^0\times\mathcal{D}_p^1\to\mbox{\textbf{Cart}}\times\mbox{\textbf{Cart}}$ that assigns to a pair of plots the pair of their domains, with the projections on factors.} Then it is obvious that $V$ decomposes as a smooth direct sum of $V_0$ and $V_1$ if and only if the assignment $(p_0,p_1)\mapsto p_0+p_1$ defines an isomorphism of categories $\mathcal{D}_p^{0,1}\to\mathcal{D}_p$. In particular, $V$ is decomposable if its category of plots $\mathcal{D}_p$ can be identified with some equalizer category $\mathcal{D}_p^{0,1}$.
\end{rem}

\paragraph{On decompositions of form $\mbox{Ker}(f)\oplus\mbox{Im}(f)$} Finally, we observe that if nondecomposable diffeological vector spaces do exist, for any such space a diffeomorphism $V\cong\mbox{Ker}(f)\oplus\mbox{Im}(f)$ ($\mbox{Im}(f)$ being considered, recall, with the pushforward diffeology) automatically cannot hold for any nontrivial $f$ with nontrivial kernel (this by nature of what it means to be a nondecomposable space). On the other hand, the subspace $\mbox{Ker}(f)$ being a summand in some non-smooth decomposition does not, of course, exclude that there may be such a diffeomorphism, as can be illustrated by the space $V$ of \cite{me2018} ($\mathbb{R}^3$ with the vector space diffeology generated by $x\mapsto|x|(e_2+e_3)$) and the map $f$ defined on $V$ by $e_1\mapsto e_1$, $e_2\mapsto e_2$, $e_3\mapsto 0$, and taking values in $\mathbb{R}^2$ endowed with pushforward diffeology.

\vspace{1cm}

\noindent University of Pisa \\
Department of Mathematics \\
Via F. Buonarroti 1C\\
56127 PISA -- Italy\\
\ \\
ekaterina.pervova@unipi.it\\

\end{document}